\newcommand{\QED}{\hspace*{\fill}\rule{2.5mm}{2.5mm}}
\newtheorem{theorem}{Theorem}[section]
\newenvironment{proof}{\noindent{\bf Proof\ }}{\QED\\}
\newcommand{\R}{\mathbb{R}}
\newtheorem{lemma}{Lemma}[section]
\begin{document}
\begin{center}
\vspace{0.5cm} {\large \bf ``Quantiles symmetry"}\\
\vspace{1cm} Reza Hosseini, University of British Columbia\\
333-6356 Agricultural Road, Vancouver,\\
 BC, Canada, V6T1Z2\\
reza1317@gmail.com
\end{center}

\section{Abstract}

This paper finds a symmetry relation (between quantiles of a random
variable and its negative) that is intuitively appealing. We show
this symmetry is quite useful in finding new relations for
quantiles, in particular an equivariance property for quantiles
under continuous decreasing transformations.\\

\vspace{0.5cm}

 \noindent Keywords: Quantile function, distribution
function, symmetry, equivariance

\section{Introduction}

The traditional definition of quantiles for a random variable $X$
with distribution function $F$,
\[lq_X(p)=\inf \{x|F(x) \geq p\},\]
appears in classic works as \cite{parzen-1979}. We call this the
``left quantile function''. In some books (e.g. \cite{rychlik})
the quantile is defined as
\[rq_X(p)=\inf \{x|F(x) > p\}=\sup \{x|F(x) \leq p\},\]
this is what we call the ``right quantile function''. Also in
robustness literature people talk about the upper and lower medians
which are a very specific case of these definitions. Chapter 5 of
\cite{reza-phd} considers both definitions, explore their relation
and shows that considering both has several advantages. In
particular it provides a proof of the following lemma regarding the
properties of the quantiles.

\begin{lemma} (Quantile Properties Lemma) Suppose $X$ is a random
variable on the probability space $(\Omega,\Sigma,P)$ with
distribution function $F$:

\begin{enumerate}[a)]
\item $F(lq_F(p))\geq p$. \item $lq_F(p) \leq rq_F(p)$. \item
$p_1<p_2 \Rightarrow rq_F(p_1)\leq lq_F(p_2)$. \item
$rq_F(p)=\sup\{x|F(x)\leq p\}$. \item $P(lq_F(p)<X<rq_F(p))=0$.
i.e. $F$ is flat in the interval $(lq_F(p),rq_F(p))$. \item $P(X<
rq_F(p)) \leq p$. \item If $lq_F(p)<rq_F(p)$ then $F(lq_F(p))=p$
and hence $P(X\geq rq_F(p))=1-p$. \item
$lq_F(1)>-\infty,rq_F(0)<\infty$ and $P(rq_F(0) \leq X \leq
lq_F(1))=1$. \item $lq_F(p)$ and $rq_F(p)$ are non-decreasing
functions of $p$.  \item If $P(X=x)>0$ then $lq_F(F(x))=x.$ \item
$x<lq_F(p) \Rightarrow F(x)<p$ and $x>rq_F(p) \Rightarrow F(x)>p.$
\end{enumerate}

\label{quantile-properties}
\end{lemma}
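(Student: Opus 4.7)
The plan is to work directly from the definitions $lq_F(p)=\inf\{x:F(x)\geq p\}$ and $rq_F(p)=\inf\{x:F(x)>p\}$, using only the monotonicity and right-continuity of $F$ and the limits $F(-\infty)=0$, $F(+\infty)=1$. The unifying observation is that by right-continuity, $\{x:F(x)\geq p\}$ is a closed half-line $[lq_F(p),\infty)$, while $\{x:F(x)>p\}$ is either $(rq_F(p),\infty)$ or $[rq_F(p),\infty)$; most of the eleven statements are just unravellings of this picture.

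First I would dispose of the purely set-theoretic parts. Part (a) follows from right-continuity: $F(lq_F(p)+1/n)\geq p$ passes to $F(lq_F(p))\geq p$ in the limit. Part (b) is immediate from $\{F>p\}\subseteq\{F\geq p\}$, and (c) from $\{F\geq p_2\}\subseteq\{F>p_1\}$ whenever $p_1<p_2$; monotonicity (i) is the same argument. The implications in (k) are direct: if $x<lq_F(p)$ then $x$ fails the defining condition so $F(x)<p$, while if $x>rq_F(p)$ then some $y\in(rq_F(p),x)$ satisfies $F(y)>p$, whence $F(x)\geq F(y)>p$. For (j), $P(X=x)>0$ forces $F(y)<F(x)$ for every $y<x$, so $x=\min\{y:F(y)\geq F(x)\}$.

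Next I would treat the ``flat interval'' identities (d)--(g) as a unit. The key sublemma is that $y<rq_F(p)$ implies $F(y)\leq p$ (otherwise $y$ would itself belong to the set defining $rq_F(p)$). Combined with (a), this yields both (d) (the inf of $\{F>p\}$ coincides with the sup of $\{F\leq p\}$) and (e) (on $(lq_F(p),rq_F(p))$ one has simultaneously $F\geq p$ and $F\leq p$). Part (f) follows by taking $x_n\uparrow rq_F(p)$: $P(X<rq_F(p))=\lim P(X\leq x_n)\leq\lim F(x_n)\leq p$. For (g), when $lq_F(p)<rq_F(p)$ a sequence $y_n\downarrow lq_F(p)$ with $y_n<rq_F(p)$ satisfies $F(y_n)\leq p$, and right-continuity forces $F(lq_F(p))\leq p$; with (a) this gives equality, and then $P(X<rq_F(p))=P(X\leq lq_F(p))+P(lq_F(p)<X<rq_F(p))=p+0$ delivers $P(X\geq rq_F(p))=1-p$.

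Finally, for (h), the tail limits give the finiteness: $F(x)\to 0$ as $x\to-\infty$ produces some $x_0$ with $F(x_0)<1$, forcing $lq_F(1)\geq x_0>-\infty$; symmetrically $rq_F(0)<\infty$. The measure identity reduces to $P(X<rq_F(0))=0$ and $P(X>lq_F(1))=0$: the first because $F\equiv 0$ on $(-\infty,rq_F(0))$ by the sublemma above, and the second because $F(lq_F(1))\geq 1$ by (a) makes $F(lq_F(1))=1$. I expect the only mild subtlety across the whole lemma to be keeping track of strict versus weak inequalities in (d) and (g), where one must be careful about whether the relevant infimum is attained; everything else is a direct unwinding of the definitions together with right-continuity.
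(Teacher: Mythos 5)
Your argument is essentially correct, but note that the paper itself contains no proof of this lemma to compare against: it states the Quantile Properties Lemma and defers its proof to Chapter 5 of the cited thesis (\cite{reza-phd}). Judged on its own terms, your direct-from-definitions approach --- the half-line picture for $\{x:F(x)\geq p\}$ and $\{x:F(x)>p\}$, the sublemma that $y<rq_F(p)$ implies $F(y)\leq p$, and right-continuity for (a), (g) and the flat-interval facts --- is the standard and correct way to establish all eleven parts. Two small points of hygiene: in (d) the direction $\sup\{x:F(x)\leq p\}\leq rq_F(p)$ rests on the second half of (k) (i.e.\ $x>rq_F(p)\Rightarrow F(x)>p$) rather than on (a), which you do have available since you proved (k) first, so this is a mis-attribution rather than a gap; and in (h) you should remark on the degenerate cases where $lq_F(1)=+\infty$ (when $F<1$ everywhere) or $rq_F(0)=-\infty$ (when $F>0$ everywhere), in which the appeal to (a) at $p=1$ is vacuous but the claimed probability statement holds trivially because the corresponding tail event is empty. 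With those caveats recorded, your proof goes through.
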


Section \ref{section-quantile-symmetry} presents the desirable
``Quantile Symmetry Theorem'', a result that could be obtained
only by considering both left and right quantiles. This relation
can help us prove several other useful results regarding
quantiles. Also using the quantile symmetry theorem, we find a
relation for the equivariance property of quantiles under
non-increasing continuous transformations.

In order to motivate why this relation is intuitively appealing we
give the following example.
\begin{example}
A scientist asked two of his assistants to summarize the following
data regarding the acidity of rain:

{\tiny
\begin{table}[H]
  \centering  \footnotesize
  \begin{tabular}{lcccccccc}
\toprule[1pt]
row number & $pH$ & $aH$\\
\midrule[1pt]
1 & 4.7336  & $18.4672 \times 10^{-6}$\\
2 & 4.8327 & $14.6994 \times 10^{-6}$\\
3 & 4.8492 & $14.1514 \times 10^{-6}$\\
4 & 5.0050  & $9.8855 \times 10^{-6}$\\
5 & 5.0389  & $9.1432 \times 10^{-6}$\\
6 & 5.2487  & $5.6403 \times 10^{-6}$\\
7 & 5.2713  & $5.3543 \times 10^{-6}$\\
8 & 5.2901  & $5.1274 \times 10^{-6}$\\
9 & 5.5731  & $2.6724  \times 10^{-6}$\\
10 & 5.6105 & $2.4519 \times 10^{-6}$\\
\bottomrule[1pt]
\end{tabular}
\caption{Rain acidity data}
 \label{table:q-def-acidity}
\end{table}
}

$pH$ is defined as the cologarithm of the activity of dissolved
hydrogen ions $(H^+)$.

\[pH=-\log_{10}aH.\]
In the data file handed to the assistants (Table
\ref{table:q-def-acidity}) the data is sorted with respect to $pH$
in increasing order from top to bottom. Hence the data is arranged
decreasingly with respect to $aH$ from top to bottom.

The scientist asked the two assistants to compute the 20th and
80th percentile of the data to get an idea of the variability of
the acidity. The first assistant used the $pH$ scale and the
traditional definition of the quantile function:

\[q_F(p)=\inf \{x|\; F(x) \geq p\},\]
where $F$ is the empirical distribution of the data. He obtained
the following numbers

\begin{equation}q_F(0.2)=4.8327\; \mbox{and} \;q_F(0.8)=5.2901,
\label{eq-first-assistant-res}\end{equation} which are positioned
in row 2 and 8 respectively.

The second assistant also used the traditional definition of the
quantile function and the $aH$ scale to get

\begin{equation}q_F(0.2)=2.6724 \times 10^{-6}\; \mbox{and} \; q_F(0.8)=14.1514
\times 10^{-6}, \label{eq-second-assistant-res}\end{equation}
which correspond to row 9 and 3.

The scientist noticed that the assistants had used different
scales. Then he thought since one of the scales is in the opposite
order of the other and 0.2 and 0.8 have the same distance from 0
and 1 respectively, he must get the other first assistant's result
by transforming the second's. So he transformed the second
assistant's results given in  Equation
\ref{eq-second-assistant-res} (or by simply looking at the
corresponding rows, 9 and 3 under $pH$), to get

\[ 5.5731\; \mbox{and} \; 4.8492,\]
which is not the same as the first assistant's result in Equation
\ref{eq-first-assistant-res}. He noticed that the position of
these values are off by only one position from the previous values
(being in row 9 and 3 instead of 8 and 2).

Then he tried the same himself for 25th and 75th percentile using
both scales

\[pH: q_F(0.25)=4.8492\; \mbox{and} \; q_F(0.75)=5.2901,\]
which are positioned at 3rd and 8th row.

\[aH: q_F(0.25)=5.1274 \times 10^{-6}\; \mbox{and} \;q_F(0.25)=14.1514 \times 10^{-6},\]
which are also positioned at 8th and 3rd row. This time he was
surprised to observe the symmetry he expected. He wondered when
such symmetry exist and what can be said in general. He
conjectured that the asymmetric definition of the traditional
quantile is the reason of this asymmetry. He also conjectured that
the symmetry property is off at most by one position in the
dataset.
\end{example}

\section{Quantile symmetries}
\label{section-quantile-symmetry}

This section studies the symmetry properties of distribution
functions and quantile functions. Symmetry is in the sense that if
$X$ is a random variable, some sort of symmetry should hold
between the quantile functions of $X$ and $-X$. We only treat the
quantile functions for distributions here but the results can
readily be applied to data vectors by considering their empirical
distribution function.

Here we consider different forms of distribution functions. The
usual one is defined to be $F^{c}_X(x)=P(X \leq x)$. But clearly
one can also consider $F^{o}_X(x)=P(X <x)$, $G^{c}_X(x)=P(X\geq
x)$ or $G^{o}_X(x)=P(X>x)$ to characterize the distribution of a
random variable. We call $F^{c}$ the left-closed distribution
function, $F^{o}$ the left-open distribution function, $G^{c}$ the
right-closed and $G^{o}$ the right-open distribution function.
Like the usual distribution function  these functions can be
characterized by their limits in infinity, monotonicity and
one-sided continuity.

First note that

\[F^c_{-X}(x)=P(-X \leq x)=P(X \geq -x)=G_X^c(-x).\]
Since the left hand side is right continuous, $G^c_X$ is left
continuous. Also note that
\begin{align*}
F^c_X(x)+G^o_X(x)=1 \Rightarrow G^o_X(x)=1-F^c_X(x),\\
F^o_X(x)+G^c_X(x)=1 \Rightarrow F^o_X(x)=1-G^c_X(x).\\
\end{align*}
The above equations imply the following:\\
a) $G^o$ and $F^c$ are right continuous.\\
b) $F^{o}$ and $G^c$ are left continuous.\\
c) $G^o$ and $G^c$ are non-decreasing.\\
d) $lim_{x \rightarrow \infty} F(x)=1$ and $lim_{x \rightarrow
-\infty} F(x)=0$ for $F=F^o,F^c.$\\
e) $lim_{x \rightarrow \infty} G(x)=0$ and $lim_{x \rightarrow
-\infty} G(x)=1$ for $G=G^o,G^c$.\\

It is easy to see that the above given properties for
$F^{o},G^o,G^c$ characterize all such functions. The proof can be
given directly using the properties of the probability measure
(such as continuity) or by using arguments similar to the above.

Another lemma about the relation of $F^{c},F^{o},G^o,G^c$ is given
below.

\begin{lemma} Suppose $F^o,F^c,G^o,G^c$ are defined as above. Then\\
a) if any of $F^{c},F^{o},G^o,G^c$ are continuous, all the other
ones are
continuous too.\\
b) $F^c$ being strictly increasing is equivalent to $F^o$ being
strictly
increasing.\\
c)  $F^c$ being strictly increasing is equivalent to $G^o$ being strictly decreasing.\\
d) $G^c$ being strictly decreasing is equivalent to $G^o$ being
strictly increasing.
\end{lemma}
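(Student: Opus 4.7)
The plan is to reduce the whole lemma to two ingredients: the jump identity $F^c(x)-F^o(x)=P(X=x)=G^c(x)-G^o(x)$, and the complementarities $F^c+G^o=1$ and $F^o+G^c=1$ noted just before the statement. Together these say that the four functions differ only by a sign flip and by the point masses, so any global regularity property of one transfers to the others once those two features are handled.

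For (a), I would first note that $F^c$ is already right continuous, so it is continuous exactly when it is left continuous, which by the jump identity is equivalent to $P(X=x)=0$ for every $x$. Under this atomless condition $F^o=F^c$ and $G^c=G^o$, and the complementarities express $G^o$ and $F^o$ as $1$ minus a continuous function; hence all four are continuous. The converse directions follow by the same jump-size argument applied to whichever of the four is assumed continuous.

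Parts (c) and (d) should fall out of the complementarities: $G^o=1-F^c$ converts strict increase of $F^c$ directly into strict decrease of $G^o$, and $G^c=1-F^o$ together with $G^o=1-F^c$ reduces (d) to the equivalence of strict monotonicity of $F^c$ and of $F^o$, which is exactly part (b). So the whole lemma really rests on (b).

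The only nontrivial step is (b). The obstacle is that $F^c(y)-F^c(x)=P(x<X\le y)$ while $F^o(y)-F^o(x)=P(x\le X<y)$, and neither event contains the other, so no direct comparison is available. My plan is the standard interpolation: given $x<y$, pick $z$ with $x<z<y$; strict increase of $F^c$ applied on the pair $(x,z)$ gives $P(x<X\le z)>0$, and the inclusion $\{x<X\le z\}\subseteq\{x\le X<y\}$ forces $F^o(y)>F^o(x)$. The converse is symmetric, exchanging the roles of $F^c$ and $F^o$. This interpolation is the main hurdle; the rest is pure bookkeeping with the two identities above.
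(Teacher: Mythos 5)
Your proposal is correct and follows essentially the same route as the paper: both reduce (a), (c) and (d) to the identities $G^o=1-F^c$, $G^c=1-F^o$ and to the size of the atom $P(X=x)$ at each point, and both settle the key part (b) by interpolating a point $z$ strictly between $x$ and $y$ (the paper phrases it contrapositively, via an interval of constancy with two interior points). One small caution: in the converse direction of (b) the interpolation must be applied to the upper pair $(z,y)$, using $\{z\le X<y\}\subseteq\{x<X\le y\}$, since a literal role swap on the lower pair $(x,z)$ yields the event $\{x\le X<z\}$, which is not contained in $\{x<X\le y\}$.
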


\begin{proof}
a) Note that \[\lim_{y \rightarrow x^{-}} F^c(x)=\lim_{y
\rightarrow x^{-}} F^o(x),\] and \[\lim_{y \rightarrow x^{+}}
F^c(x)=\lim_{y \rightarrow x^{+}} F^o(x).\] If these two limits
are equal for either $F^c$ or
$F^o$ they are equal for the others as well.\\
b) If either $F^c$ or $F^o$ are not strictly increasing then they
are constant on $[x_1,x_2],x_1<x_2$. Take $x_1<y_1<y_2<x_2.$ Then

\[F^o(x_1)=F^o(x_2) \Rightarrow P(y_1\leq X \leq y_2)=0 \Rightarrow F^c(y_1)=F^c(y_2).\]
Also we have
\[F^c(x_1)=F^c(x_2) \Rightarrow P(y_1\leq X \leq y_2)=0 \Rightarrow F^o(y_1)=F^o(y_2). \]
\\
c) This is trivial since $G^o=1-F^c$.\\
d) If $G^c$ is strictly decreasing then $F^o$ is strictly
increasing since $G^c=1-F^o$. By Part b), $F^c$ strictly is
increasing. Hence $G^o=1-F^c$ is strictly decreasing.
\end{proof}

The relationship between these distribution functions and the
quantile functions are interesting and have interesting
implications. It turns out that we can replace  $F^c$ by $F^o$ in
some definitions.

\begin{lemma} Suppose $X$ is a random variable with open and closed
left distributions $F^o,F^c$ as well as open and closed right
distribution functions $G^o,G^c$. Then\\
 a) $lq_X(p)=\inf \{x|F^o_X(x)\geq
p\}$. In other words, we can replace $F^c$ by
$F^o$ in the left quantile definition.\\
b) $rq_X(p)=\inf \{x|F^o_X(x)> p\}$. In other words, we can
replace $F^c$ by
$F^o$ in the right quantile definition.\\
\end{lemma}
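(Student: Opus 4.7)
The plan is to prove both (a) and (b) by the same two-sided inclusion argument on the defining sets, using the pointwise inequality $F^o_X(x) \le F^c_X(x)$ (which holds because $\{X<x\}\subseteq\{X\le x\}$) together with the monotonicity observation that $F^o_X(y)\ge F^c_X(x)$ whenever $x<y$ (since $\{X\le x\}\subseteq\{X<y\}$). This second inequality is the real engine of the proof: it lets us trade a $\ge$-point of $F^c$ for a slightly-larger $\ge$-point of $F^o$.

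For (a), I would set $A=\{x\mid F^c_X(x)\ge p\}$ and $B=\{x\mid F^o_X(x)\ge p\}$. The containment $B\subseteq A$ follows immediately from $F^o\le F^c$, giving $\inf A\le \inf B$, i.e., $lq_X(p)\le \inf B$. For the reverse direction, pick any $y>\inf A=lq_X(p)$. By definition of infimum, there exists $x\in A$ with $x<y$, so $F^c_X(x)\ge p$; then by the monotonicity observation, $F^o_X(y)\ge F^c_X(x)\ge p$, hence $y\in B$. Since every $y>lq_X(p)$ lies in $B$, we conclude $\inf B\le lq_X(p)$, which finishes (a).

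For (b), the argument is structurally identical. Set $A'=\{x\mid F^c_X(x)>p\}$ and $B'=\{x\mid F^o_X(x)>p\}$; the inclusion $B'\subseteq A'$ again yields $rq_X(p)\le \inf B'$. Conversely, for $y>rq_X(p)=\inf A'$, choose $x\in A'$ with $x<y$, so $F^c_X(x)>p$, and then $F^o_X(y)\ge F^c_X(x)>p$, placing $y$ in $B'$. Taking the infimum over such $y$ gives $\inf B'\le rq_X(p)$.

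The only subtle point — and the one I would pause on — is that we cannot expect $\inf A$ itself to belong to $B$, because at a jump point of $F^c$ one may well have $F^o(lq_X(p))<p\le F^c(lq_X(p))$. The proof sidesteps this by approximating the infimum strictly from above rather than evaluating at the infimum itself, which is the whole reason the argument works for both the $\ge p$ and the $>p$ set without any change. No other case analysis seems necessary.
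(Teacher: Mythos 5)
Your proof is correct and uses essentially the same argument as the paper: one inequality comes from the inclusion $\{x\mid F^o_X(x)\geq p\}\subseteq\{x\mid F^c_X(x)\geq p\}$ (and likewise for the strict sets), and the other from the observation that $x<y$ turns $F^c_X(x)\geq p$ (resp.\ $>p$) into $F^o_X(y)\geq p$ (resp.\ $>p$). The paper merely packages this second step as a proof by contradiction with two intermediate points $x_0<y_0$, whereas you argue directly by approximating the infimum strictly from above; the content is identical.
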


\begin{proof}
a) Let $A= \{x|F^o_X(x)\geq p\}$ and $B= \{x|F^c_X(x)\geq p\}$. We
want to show that $\inf A=\inf B$. Now\\
\[A \subset B \Rightarrow \inf A \geq \inf B.\]
But \[\inf B < \inf A \Rightarrow \exists x_0,y_0,\;\; \inf B
<x_0<y_0 < \inf A.\] Then

\begin{align*}\inf B < x_0 \Rightarrow \exists b
\in B,\;b<x_0 \Rightarrow \exists b \in \R,\; p \leq P(X \leq
b)\leq
P(X \leq x_0)\\
\Rightarrow P(X \leq x_0)\geq p \Rightarrow P(X <y_0)\geq p.
\end{align*}
On the other hand

\[y_0 < \inf A \Rightarrow y_0 \notin A \Rightarrow P(X<y_0)<p,\]
which is a contradiction, thus proving a).\\
 b) Let $A= \{x|F^o_X(x)> p\}$ and $B=
\{x|F^c_X(x)> p\}$. We
want to show $\inf A=\inf B$. Again,\\
\[A \subset B \Rightarrow \inf A \geq \inf B.\]
But
\[\inf B < \inf A \Rightarrow \exists x_0,y_0,\; \inf B<x_0<y_0<\inf A.\] Then

\begin{align*}\inf B < x_0 \Rightarrow \exists b
\in B,\;b<x_0 \Rightarrow \exists b \in \R,\; p < P(X \leq b)\leq
P(X \leq x_0)\\
\Rightarrow P(X \leq x_0)> p \Rightarrow P(X <y_0)> p.
\end{align*}
On the other hand,
\[y_0 < \inf A \Rightarrow y_0 \notin A \Rightarrow P(X<y_0) \leq p,\]
which is a contradiction.
\end{proof}

Using the above results, we establish the main theorem of this
section which states the symmetry property of the left and right
quantiles.

\begin{theorem}(Quantile Symmetry Theorem) Suppose $X$ is a random variable and $p \in [0,1]$.
Then

\[lq_X(p)=-rq_{-X}(1-p).\]

\label{theo-quantile-symmetry}
\end{theorem}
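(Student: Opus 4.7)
The plan is to rewrite both sides in terms of the single function $F^{o}_{X}$ using the two preceding lemmas, and then exploit monotonicity.

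First I would unpack the right-hand side. By definition, $rq_{-X}(1-p)=\inf\{x\mid P(-X\leq x)>1-p\}$. Using the identity
\[P(-X\leq x)=P(X\geq -x)=G^{c}_{X}(-x)=1-F^{o}_{X}(-x),\]
the condition $P(-X\leq x)>1-p$ becomes $F^{o}_{X}(-x)<p$. Substituting $y=-x$ inside the infimum then gives
\[-rq_{-X}(1-p)=\sup\{y\mid F^{o}_{X}(y)<p\}.\]
For the left-hand side, part (a) of the previous lemma immediately yields
\[lq_{X}(p)=\inf\{y\mid F^{o}_{X}(y)\geq p\}.\]
So the theorem reduces to the purely analytic claim that for a non-decreasing function $F^{o}_{X}$,
\[\sup\{y\mid F^{o}_{X}(y)<p\}=\inf\{y\mid F^{o}_{X}(y)\geq p\}.\]

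I expect this last equality to be the main (though mild) obstacle. One direction is easy: if $F^{o}_{X}(y_{1})<p$ and $F^{o}_{X}(y_{2})\geq p$ then monotonicity forces $y_{1}\leq y_{2}$, so the sup is at most the inf. For the reverse, I would argue by contradiction: if the inequality were strict, pick $z$ strictly between the sup and the inf. Because $z$ exceeds the sup of $\{F^{o}_{X}<p\}$, we have $F^{o}_{X}(z)\geq p$, placing $z$ in $\{F^{o}_{X}\geq p\}$; but because $z$ is below the infimum of that set, $z$ cannot belong to it. This contradiction establishes equality and completes the proof.

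One subtlety worth checking in the write-up is the edge cases $p=0$ and $p=1$, where some of the sets involved could be empty or all of $\mathbb{R}$; the sup/inf conventions $\sup\emptyset=-\infty$ and $\inf\emptyset=+\infty$ make the identity still go through, but it is worth a line to confirm. Otherwise the argument is a clean two-step reduction: (i) symmetry of the four distribution functions converts $rq_{-X}(1-p)$ into a statement about $F^{o}_{X}$, and (ii) the lemma allowing $F^{c}$ to be replaced by $F^{o}$ in $lq$ puts the left-hand side on the same footing, after which monotonicity closes the argument.
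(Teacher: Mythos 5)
Your proposal is correct. It shares the paper's overall strategy — negate, use $F^c_{-X}(x)=G^c_X(-x)=1-F^o_X(-x)$ to flip everything into a statement about $F^o_X$, and invoke the lemma $lq_X(p)=\inf\{x\mid F^o_X(x)\geq p\}$ — but it differs in one genuine respect. The paper starts from the sup-characterization $rq(p)=\sup\{x\mid F^c(x)\leq p\}$ (part d of the Quantile Properties Lemma), so after the substitutions the two sides match immediately and the proof is a single chain of set rewritings with no further work. You instead start from the inf-definition $rq(p)=\inf\{x\mid F^c(x)>p\}$, which leaves you with the bridging identity $\sup\{y\mid F^o_X(y)<p\}=\inf\{y\mid F^o_X(y)\geq p\}$, and you prove that directly from monotonicity (the two sets partition $\R$ into a down-set and an up-set, so the sup and inf coincide; your two-sided argument, including the intermediate-point contradiction, is sound, and your remark about the extended-real conventions at $p=0,1$ is the right caveat). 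In effect you re-derive, for $F^o$, the same content that the paper imports from Lemma 1.1(d). What you buy is self-containedness — you never need the sup-form of $rq$ — at the cost of an extra elementary argument; what the paper buys is brevity, since that fact is already in its toolkit. Incidentally, your reduced identity is the $F^o$-analogue of the lemma $lq_X(p)=\sup\{x\mid F^c(x)<p\}$ that the paper later deduces \emph{from} the symmetry theorem; since you prove your version directly from monotonicity rather than citing anything downstream, there is no circularity.
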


\noindent {\bf Remark.} We immediately conclude
\[rq_X(p)=-lq_{-X}(1-p),\]
by replacing $X$ by $-X$ and $p$ by $1-p$.

\begin{proof}

\begin{align*}
R.H.S=-\sup\{x|P(-X \leq x) \leq 1-p\}&=&\\
\inf\{-x|P(X \geq -x)\leq 1-p\}&=&\\
\inf\{x|P(X \geq x)\leq 1-p\}&=&\\
\inf\{x|1-P(X \geq x) \geq p\}&=&\\
\inf\{x|1-G^{c}(x) \geq p\}&=&\\
\inf\{x|F^{o}(x)\geq p\}=lq_X(p).&&
\end{align*}
\end{proof}

Now we show how these symmetries can become useful to derive other
relationships for quantiles.

\begin{lemma} Suppose $X$ is a random variable with distribution
function $F$. Then
\[lq_X(p)=\sup\{x|F^{c}(x)<p\}.\]
\end{lemma}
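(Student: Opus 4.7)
The plan is to exploit the Quantile Symmetry Theorem together with Lemma 2.3 b), converting the given right-quantile expression into a left-quantile expression via the change of sign. The intuition is that moving from $\inf\{x \mid F^c(x) \geq p\}$ to $\sup\{x \mid F^c(x) < p\}$ is morally the same operation as swapping ``left'' and ``right'' in quantile conventions; the symmetry theorem makes this rigorous by routing everything through $-X$.

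First I would invoke Theorem \ref{theo-quantile-symmetry} to write $lq_X(p) = -rq_{-X}(1-p)$. Next, I would apply Lemma 2.3 b) (which permits replacing $F^c$ by $F^o$ in the right-quantile definition) to get
\[
rq_{-X}(1-p) = \inf\{x \mid F^o_{-X}(x) > 1-p\}.
\]
Then I would rewrite $F^o_{-X}$ in terms of $F^c_X$ using the identity
\[
F^o_{-X}(x) = P(-X < x) = P(X > -x) = 1 - F^c_X(-x),
\]
so the condition $F^o_{-X}(x) > 1-p$ becomes equivalent to $F^c_X(-x) < p$.

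Finally, substituting $y = -x$ and using the standard fact $-\inf\{x \mid \varphi(-x)\} = \sup\{y \mid \varphi(y)\}$, I obtain
\[
lq_X(p) = -\inf\{x \mid F^c_X(-x) < p\} = \sup\{y \mid F^c_X(y) < p\},
\]
which is the claim. There is no real obstacle here — the argument is a clean substitution chain. The only place to be mildly careful is the passage from $F^o_{-X}$ back to $F^c_X$ (checking that the strict inequalities line up correctly under the complementation $1 - (\cdot)$), and confirming that the sign change converts $\inf$ into $\sup$ over the reflected set. Alternatively, one could give a direct proof by showing $\inf\{x \mid F^c(x) \geq p\} = \sup\{x \mid F^c(x) < p\}$ via monotonicity of $F^c$ and a two-sided inclusion argument in the style of Lemma 2.3, but using the symmetry theorem demonstrates the promised utility of Theorem \ref{theo-quantile-symmetry} and is noticeably shorter.
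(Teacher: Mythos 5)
Your argument is correct and is essentially the paper's own proof: both route through the Quantile Symmetry Theorem, rewrite $rq_{-X}(1-p)$ via the lemma allowing $F^o$ in the right-quantile definition, use the complementation identity $F^o_{-X}(x)=1-F^c_X(-x)$ (the paper phrases it through $G^c_{-X}$), and finish by the sign substitution turning $\inf$ into $\sup$. No substantive difference.
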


\begin{proof}
\begin{align*}
lq_X(p)=-rq_{-X}(1-p)=-\inf\{x|F^o_{-X}(x)>1-p\}&=&\\
-\inf\{x|1-G^{c}_{-X}(x)>1-p\}=\sup\{-x|G_{-X}^c(x)<p\}&=&\\
\sup \{-x|P(-X\geq x)<p\}=\sup\{x|P(X \leq x)<p\}&=&\\
\sup\{x|F^{c}(x)<p\}&.&
\end{align*}
\end{proof}

\section{Equivariance of quantiles under decreasing transformations}

It is widely claimed that (e.g. in
\cite{quantile-regression-koenker} or
\cite{quantile-regression-hao}) the traditional quantile function
is equivariant under monotonic transformations.  \cite{reza-phd}
shows that this does not hold even for strictly increasing
functions. However he proves that the traditional quantile
function is equivariant under non-decreasing left continuous
transformations. He also shows that the right quantile function is
equivariant under non-decreasing right continuous transformations.
In other words

\[lq_{\phi(X)}(p)=\phi(lq_X(p)),\]
where $\phi$ is non-decreasing left continuous. Also
\[rq_{\phi(X)}(p)=\phi(rq_X(p)),\]
for $\phi:\R \rightarrow \R$ non-decreasing right continuous.

Using the quantile symmetry, a similar neat result is found for
continuous decreasing transformations using the Quantile Symmetry
Theorem.

\begin{theorem} (Decreasing transformation equivariance)\\
a) Suppose $\phi$ is non-increasing and  right continuous on $\R$.
Then
\[lq_{\phi(X)}(p)=\phi(rq_X(1-p)).\]
b) Suppose $\phi$ is non-increasing and  left continuous on $\R$.
Then
\[rq_{\phi(X)}(p)=\phi(lq_X(1-p)).\]
 \label{theo-decreas-equiv}
\end{theorem}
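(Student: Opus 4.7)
The plan is to reduce the non-increasing case to the already-established non-decreasing case by composing with the negation map, and then to collapse the resulting expression using the Quantile Symmetry Theorem. Concretely, for each $\phi:\R\to\R$ I would define $\psi:\R\to\R$ by $\psi(y)=\phi(-y)$, so that $\phi(X)=\psi(-X)$. The map $\psi$ is clearly non-decreasing whenever $\phi$ is non-increasing, and the one-sided continuity flips under negation: a quick check of one-sided limits shows that $\phi$ is right continuous iff $\psi$ is left continuous, and $\phi$ is left continuous iff $\psi$ is right continuous.

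For part a), with $\phi$ non-increasing and right continuous, $\psi$ is non-decreasing and left continuous, so the already-cited left-continuous equivariance gives
\[
lq_{\phi(X)}(p)=lq_{\psi(-X)}(p)=\psi\bigl(lq_{-X}(p)\bigr)=\phi\bigl(-lq_{-X}(p)\bigr).
\]
Applying the Remark after the Quantile Symmetry Theorem, which gives $rq_X(1-p)=-lq_{-X}(p)$, the right-hand side becomes $\phi(rq_X(1-p))$, as desired.

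For part b), with $\phi$ non-increasing and left continuous, $\psi$ is non-decreasing and right continuous, so the right-continuous equivariance for $rq$ yields
\[
rq_{\phi(X)}(p)=rq_{\psi(-X)}(p)=\psi\bigl(rq_{-X}(p)\bigr)=\phi\bigl(-rq_{-X}(p)\bigr),
\]
and the Quantile Symmetry Theorem itself, $lq_X(1-p)=-rq_{-X}(p)$, finishes the argument.

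There is essentially no genuine obstacle once the reduction is in place; the only point requiring care is the bookkeeping of which side of continuity survives under negation (one must not accidentally assume that a right-continuous $\phi$ produces a right-continuous $\psi$). Once that is handled correctly, the theorem is an immediate two-line consequence of the two inputs quoted above, so I would present it as a short direct derivation rather than breaking it into sublemmas.
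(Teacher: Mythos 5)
Your proof is correct. It uses the same two ingredients as the paper --- the Quantile Symmetry Theorem and the equivariance of $lq$/$rq$ under non-decreasing one-sided-continuous maps --- but in mirror image. The paper negates the \emph{values}: it writes $-\phi(X)=(-\phi)(X)$, first applies quantile symmetry to the variable $\phi(X)$, and then uses the monotone equivariance for $-\phi$, which keeps the same side of continuity as $\phi$ (so part a) invokes the right-continuous $rq$-equivariance and part b) the left-continuous $lq$-equivariance). You instead negate the \emph{argument}: $\phi(X)=\psi(-X)$ with $\psi(y)=\phi(-y)$, apply the monotone equivariance first and quantile symmetry to $X$ last; pre-composition with negation flips the continuity side, so the roles of the two equivariance lemmas swap (part a) uses the $lq$ statement, part b) the $rq$ statement), and you correctly flagged and verified this flip, which is the only place your route needs extra care and which the paper's value-negation avoids. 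Beyond that bookkeeping the two derivations are of equal length and neither is more general; your version is a perfectly good alternative proof.
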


\begin{proof}
a) By the Quantile Symmetry Theorem, we have
\[lq_{\phi(X)}(p)=-rq_{-\phi(X)}(1-p).\]
But $-\phi$ is  non-decreasing right continuous, hence the above
is equivalent to
\[-(-\phi(rq_X(1-p)))=\phi(rq_X(1-p)).\]
b) By the Quantile symmetry Theorem
\[rq_{\phi(X)}(p)=-lq_{-\phi(X)(1-p)}=-(-\phi(lq_X(1-p)))=\phi(lq_X(p)),\]
since $-\phi$ is non-decreasing and left continuous.
\end{proof}

\bibliographystyle{plain}
\bibliography{D:/School/Research/PhD_thesis/mybibreza}
\end{document}